\newtheorem{theorem}{Theorem}[section]
\newtheorem{lemma}[theorem]{Lemma}
\theoremstyle{definition}
\newtheorem{definition}[theorem]{Definition}
\newtheorem{remark}[theorem]{Remark}
\newtheorem{example}[theorem]{Example}
\title{Inside $s$-inner product sets and Euclidean designs}
\author{Hiroshi Nozaki}
\begin{document}
\maketitle

\renewcommand{\thefootnote}{\fnsymbol{footnote}}
\footnote[0]{2000 Mathematics Subject Classification: 05B30 (52C99).}
\footnote[0]{Supported by JSPS Research Fellow.}
\begin{abstract}
A finite set $X$ in the Euclidean space is called an $s$-inner product set if the set of
the usual inner products of any two distinct points in $X$ has size $s$. 
First, we give a special upper bound for the cardinality of an $s$-inner product set on concentric spheres. 
The upper bound coincides with the known lower bound for the size of a Euclidean $2s$-design. 
Secondly, we prove the non-existence of $2$- or $3$-inner product sets on two concentric spheres attaining the upper bound for any $d>1$.  
The efficient property needed to prove the upper bound for an $s$-inner product set gives the new concept, inside $s$-inner product sets. We characterize the most known tight Euclidean designs as inside $s$-inner product sets attaining the upper bound. 
\end{abstract}
\section{Introduction}
Delsarte-Goethals-Seidel \cite{Delsarte-Goethals-Seidel} gave a fundamental work 
exploring a new area within combinatorics for a finite subset of 
the unit sphere $S^{d-1}$. 
The two concepts of spherical $t$-designs and 
$s$-distance sets play important roles in their article. 

A finite subset $X$ of the Euclidean space $\mathbb{R}^d$ is called an $s$-distance set if the size of the set of the Euclidean distances between 
any two distinct points of $X$ is equal to $s$. 
We have a natural upper bound for the cardinality of an $s$-distance set in $S^{d-1}$, namely $|X| \leq \binom{d+s-1}{s}+\binom{d+s-2}{s-1}$.  
A basic problem for $s$-distance sets is to determine the maximum cardinality of $s$-distance sets for fixed $s$ and $d$. 

A spherical $t$-design is a set of points in $S^{d-1}$ satisfying that 
for any $d$-variable polynomial $f$ of degree at most $t$,
 the average of $f$ on the sphere is equal to the average of $f$ on the set. 
There is a natural lower bound for the size of a spherical $t$-design, and a design attaining this bound is said to be tight. 
 The bound for a $2e$-design is $|X|\geq \binom{d+e-1}{e}+\binom{d+e-2}{e-1}$.  
The primary purpose is to find the minimal design for fixed $t$ and $d$. 

One of main results in \cite{Delsarte-Goethals-Seidel} is that when
$|X|=\binom{d+s-1}{s}+\binom{d+s-2}{s-1}$, $X$ is an $s$-distance set in $S^{d-1}$ if and only if $X$ is a spherical 
$2s$-design. The classification of tight spherical $t$-designs is complete except for $t=4,5,7$ \cite{Bannai-Damerell1,Bannai-Damerell2,Bannai-Munemasa-Venkov}

This result is generalized in \cite{nozaki-shinohara} with a relationship between locally $s$-distance sets and weighted spherical $t$-designs. 
The two concepts have the same upper and lower bounds respectively. It follows that when
$|X|=\binom{d+s-1}{s}+\binom{d+s-2}{s-1}$, $X$ is a locally $s$-distance set if and only if $X$ is a weighted spherical 
$2s$-design. 
Actually, the weight function for a tight weighted spherical $t$-design is constant, and hence it becomes a 
spherical $t$-design. 
This implies that a locally $s$-distance set attaining the bound is an $s$-distance set. 

We expect that the theory is generalized to the Euclidean space $\mathbb{R}^d$. 
A key of the generalization is the existence of generalized upper bound and lower bound. 

The concept of Euclidean designs is introduced in \cite{Neumaier-Seidel}, and is known as a generalization of spherical designs to concentric spheres. 
A natural lower bound is well known, and a design attaining the bound is said to be tight. 
The classifications and the structures of tight designs are studied in many papers \cite{Bajnok2, Bajnok,Bannai-Bannai,Bannai-Bannai2,Bannai-Bannai-Hirao-Sawa,Bannai-Bannai-Shigezumi,Bannai-Bannai-Suprijanto,Etsuko,Etsuko2,Hirao-Sawa-Zhou}. 

We have two generalizations of spherical $s$-distance sets to $\mathbb{R}^d$. 
One is an $s$-distance set in $\mathbb{R}^d$, and the other is an $s$-inner product set in $\mathbb{R}^d$. 
There is an upper bound for $s$-distance sets on concentric spheres, and the upper bound coincides with 
the lower bound for Euclidean $2s$-designs \cite{Bannai-Bannai-Stanton, Bannai-Kawasaki-Nitamizu-Sato, Blokhuis}. 
Lison\v{e}k \cite{Liso} gave a $45$ point $2$-distance set in $\mathbb{R}^8$ attaining the upper bound. 
The example is a Euclidean $2$-design, and not a tight design. Other examples of $s$-distance sets 
attaining the bound have not been found so far. 

On the other hand, Deza and Frankl \cite{Deza-Frankl} proved the upper bound $|X| \leq \binom{d+s}{s}$ for a locally $s$-inner product set in $\mathbb{R}^d$. 
They state that ``As pointed out by the referee, Theorem 1.4 can be deduced also using 
the approach of Koornwinder'' in  \cite{Deza-Frankl}. 
However, a proof by this method has not been published. 
This problem was presented at the conference on Combinatorics, Geometry and Computer Science in 2007 \cite{Cameron}.

The first result of the present paper is to give a proof of the bound by the method of Koornwinder \cite{Koornwinder}. 
Moreover, the upper bound due to Deza--Frankl is improved as an upper bound for a locally $s$-inner product set on concentric spheres. 
The new upper bound coincides with 
the lower bound for Euclidean designs. 

In Section \ref{non}, we classify $2$- or $3$-inner product sets on two concentric spheres attaining the upper bound. Indeed, there dose not exist such an inner product set for any $d\geq 2$. 

The efficient property needed to prove the new bound for a locally $s$-inner product set gives the new concept, inside $s$-inner product sets. We can find a lot of examples of inside inner product sets attaining the upper bound. In particular, the most known tight Euclidean designs or ``modified'' tight Euclidean designs are inside inner product sets attaining the upper bound. This is a good characterization of several tight Euclidean designs with the view point of geometry.

\section{Preliminaries}
Let $\mathbb{R}^d$ denote the $d$-dimensional Euclidean space. 
For $x,y \in \mathbb{R}^d$, we denote their standard inner product by $(x,y)$.
Let $X$ be a finite subset of $\mathbb{R}^d$. We define 
$A(X):=\{(x,y) \mid  x,y \in X, x\ne y \}$.
For a fixed $x \in X$, we define 
$A(x):=\{(x,y) \mid  y \in X, x \ne y \}$
and $B(x):=\{(x,y) \mid  y \in X, x \ne y, ||x||\geq ||y|| \}$,
where $||x||:=\sqrt{(x,x)}$ is the norm of $x$. Let $|\ast|$ denote the cardinality. 
\begin{definition}
\begin{enumerate}
\item $X$ is called an $s$-inner product set if $|A(X)|=s$.  
\item $X$ is called a locally $s$-inner product set if $|A(x)| \leq s$ for each $x \in X$. 
\item $X$ is called an inside $s$-inner product set if $|B(x)| \leq s$ for each $x \in X$. 
\end{enumerate}
\end{definition}
Note that an $s$-inner product set is a locally $s$-inner product set, and a locally $s$-inner product
set is an inside $s$-inner product set. 

 Let $\mathcal{S}:=S_1\cup S_2 \cup \cdots \cup S_p$ be a union of $p$ concentric spheres, 
where $S_i$ is a sphere whose center is the origin and whose radius is $r_i$. 
We assume $0 \leq r_1 < r_2 < \cdots <r_p$. 
If $r_1$ is equal to zero, then $S_1$ is the origin and regarded as a special sphere. 
If $\mathcal{S}$ contains the origin, then $\varepsilon_{\mathcal{S}}:=1$, and if $\mathcal{S}$ does not contain the origin, then $\varepsilon_{\mathcal{S}}:=0$. 
For $X\subset \mathcal{S}$, we define $X_i:= X \cap S_i$. 
We say $X$ is supported by $\mathcal{S}$ if every $X_i$ is not empty.

Let ${\rm Hom}_l(\mathbb{R}^d)$ be the linear space of all real homogeneous polynomials of degree $l$, with $d$ variables. 
Define $\mathcal{P}_l(\mathbb{R}^d):= \oplus_{k=0}^l {\rm Hom}_k(\mathbb{R}^d)$, 
 $\mathcal{P}_l^{\ast}(\mathbb{R}^d):=\oplus_{i=0}^{\lfloor l/2 \rfloor} {\rm Hom}_{l-2i}(\mathbb{R}^d)$, 
 and ${\rm Harm}_l(\mathbb{R}^d):=\{f \in {\rm Hom}_l(\mathbb{R}^d) \mid \Delta f = 0 \}$, 
where $\Delta:=\sum_{i=1}^d {\partial^2}/{\partial x_i^2}$. 
An element of ${\rm Harm}_l(\mathbb{R}^d)$ is called a harmonic polynomial.    
Let $\mathcal{P}_l(\mathcal{S})$, ${\rm Hom}_l(\mathcal{S})$, ${\rm Harm}_l(\mathcal{S})$ and $\mathcal{P}_l^{\ast}(\mathcal{S})$ be the linear space of all functions which are the restrictions of
the corresponding polynomials to $\mathcal{S}$. 
For example, $\mathcal{P}_l(\mathcal{S}):=\{f|_{\mathcal{S}} \mid f \in \mathcal{P}_l(\mathbb{R}^d) \}$. 

The dimensions of these linear spaces are well known. 
Define $p'=p-\varepsilon_{\mathcal{S}}$.

\begin{theorem}[\cite{Bannai-Bannai-book,Delsarte-Seidel,Erdelyi}] 
\begin{enumerate}\setlength{\parskip}{0cm}
\item 
$
\dim\mathcal{P}_l(\mathcal{S})=\begin{cases}
\varepsilon_{\mathcal{S}} + \sum_{i=0}^{2p'-1} \binom{d+l-i-1}{d-1} \text{\qquad if $l \geq 2p'$}, \\
\dim \mathcal{P}_l(\mathbb{R}^d)=\binom{d+l}{l} \text{\qquad  if $l \leq 2p'-1$.} 
\end{cases}
$

\item 
$
\dim \mathcal{P}_l^{\ast}(\mathcal{S})=
\begin{cases}
\varepsilon_{\mathcal{S}}+\sum_{i=0}^{p'-1} \binom{d+l-2i-1}{d-1} \text{\qquad if $l$ is even and $l\geq 2p'$},\\
\sum_{i=0}^{p'-1} \binom{d+l-2i-1}{d-1} \text{\qquad if $l$ is odd and $l\geq 2p'$},\\
\dim \mathcal{P}_l^{\ast}(\mathbb{R}^d)=\sum_{i=0}^{\lfloor \frac{l}{2} \rfloor} \binom{d+l-2i-1}{d-1} \text{\qquad if $l \leq 2p'-1$}.
\end{cases}
$
\end{enumerate}
\end{theorem}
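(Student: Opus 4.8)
The plan is to obtain each dimension from the restriction map by rank--nullity. For $V\in\{\mathcal{P}_l(\mathbb{R}^d),\mathcal{P}_l^{\ast}(\mathbb{R}^d)\}$ the restriction $\rho\colon V\to V|_{\mathcal{S}}$ is surjective by the very definition of $\mathcal{P}_l(\mathcal{S})$ and $\mathcal{P}_l^{\ast}(\mathcal{S})$, so it suffices to compute $\dim\ker\rho$, the space of polynomials in $V$ vanishing identically on $\mathcal{S}$. Write $\rho_1<\cdots<\rho_{p'}$ for the $p'$ positive radii among $r_1,\dots,r_p$ and set $G(x):=\prod_{i=1}^{p'}(\|x\|^2-\rho_i^2)$, a polynomial of degree $2p'$ which is a genuine polynomial in $\|x\|^2$ (so all its homogeneous components have even degree) and satisfies $G(0)=\prod_i(-\rho_i^2)\ne 0$.

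The first ingredient I would establish is an elementary divisibility lemma: for $d\ge 2$ and $\rho>0$, any $f$ with $f|_{S_\rho}=0$ is divisible by $\|x\|^2-\rho^2$. The proof is by division: reducing $f$ modulo $\|x\|^2-\rho^2$ lowers the degree in $x_d$ to at most one, giving $f=(\|x\|^2-\rho^2)q+a(x')+x_d\,b(x')$ with $x'=(x_1,\dots,x_{d-1})$; evaluating at the two points $x_d=\pm\sqrt{\rho^2-\|x'\|^2}$ over the open $(d-1)$-ball forces $a\equiv b\equiv 0$. Since the factors $\|x\|^2-\rho_i^2$ are pairwise coprime, a polynomial vanishing on all positive spheres is divisible by $G$. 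Hence $f\in\ker\rho$ iff $G\mid f$ and, when $\varepsilon_{\mathcal{S}}=1$, additionally $f(0)=0$.

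For part $(1)$ I would write $f=Gg$ with $\deg g\le l-2p'$, so that $g\mapsto Gg$ identifies $\ker\rho$ with $\mathcal{P}_{l-2p'}(\mathbb{R}^d)$ when $\varepsilon_{\mathcal{S}}=0$; when $\varepsilon_{\mathcal{S}}=1$ the condition $f(0)=0$ becomes $g(0)=0$ (as $G(0)\ne 0$), removing exactly the constants, so $\dim\ker\rho=\binom{d+l-2p'}{d}-\varepsilon_{\mathcal{S}}$ for $l\ge 2p'$ and $\dim\ker\rho=0$ for $l\le 2p'-1$. Subtracting from $\dim\mathcal{P}_l(\mathbb{R}^d)=\binom{d+l}{d}$ and applying the telescoping identity $\binom{d+l}{d}-\binom{d+l-2p'}{d}=\sum_{i=0}^{2p'-1}\binom{d+l-i-1}{d-1}$ gives $(1)$. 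For part $(2)$ the key extra observation is that, because $G$ has only even-degree components and $G(0)\ne 0$, one has $Gg\in\mathcal{P}_l^{\ast}(\mathbb{R}^d)$ if and only if $g\in\mathcal{P}_{l-2p'}^{\ast}(\mathbb{R}^d)$: multiplication by $G$ preserves the parity of every homogeneous degree, and the nonvanishing constant term of $G$ detects the lowest-degree part of $g$. This identifies $\ker\rho$ with $\mathcal{P}_{l-2p'}^{\ast}(\mathbb{R}^d)$, cut down by $\{g(0)=0\}$ when $\varepsilon_{\mathcal{S}}=1$. Here the origin condition is parity-sensitive: the constant term belongs to $\mathcal{P}_{l-2p'}^{\ast}$ only when $l$ is even, so it removes one dimension precisely when $\varepsilon_{\mathcal{S}}=1$ and $l$ is even, producing exactly the $+\varepsilon_{\mathcal{S}}$ that appears only in the even case. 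The analogous telescoping $\dim\mathcal{P}_l^{\ast}(\mathbb{R}^d)-\dim\mathcal{P}_{l-2p'}^{\ast}(\mathbb{R}^d)=\sum_{i=0}^{p'-1}\binom{d+l-2i-1}{d-1}$ then finishes $(2)$.

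The binomial telescoping is routine; the genuine content is the divisibility lemma (valid for $d\ge 2$) together with, in part $(2)$, the interplay between the parity grading of $\mathcal{P}_l^{\ast}$ and the origin condition. I expect this last point — verifying that the graded multiplication-by-$G$ map sends $\mathcal{P}_{l-2p'}^{\ast}$ isomorphically onto $\ker\rho$ and that $g(0)=0$ is nontrivial exactly in the even case — to be the main step to get right.
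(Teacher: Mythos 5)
Your proof is correct, but note that there is nothing in the paper to compare it against: the paper states this theorem as a known result, citing \cite{Bannai-Bannai-book,Delsarte-Seidel,Erdelyi} ("The dimensions of these linear spaces are well known"), and gives no proof. Your route---rank--nullity for the restriction map (surjective by the definition of $\mathcal{P}_l(\mathcal{S})$ and $\mathcal{P}_l^{\ast}(\mathcal{S})$), together with the identification of the kernel as $G\cdot\mathcal{P}_{l-2p'}(\mathbb{R}^d)$, resp.\ $G\cdot\mathcal{P}^{\ast}_{l-2p'}(\mathbb{R}^d)$, cut down by the condition $f(0)=0$ when $\varepsilon_{\mathcal{S}}=1$, where $G(x)=\prod_{i=1}^{p'}(\|x\|^2-\rho_i^2)$---is essentially the standard argument underlying those references, so this is a faithful reconstruction rather than a genuinely new path. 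The two steps carrying real content are both handled soundly: (i) the divisibility lemma, where your division argument in the variable $x_d$ and evaluation at $x_d=\pm\sqrt{\rho^2-\|x'\|^2}$ over the open ball is valid, and pairwise coprimality of the factors is immediate since any common divisor of $\|x\|^2-\rho_i^2$ and $\|x\|^2-\rho_j^2$ divides the nonzero constant $\rho_j^2-\rho_i^2$ (then $G\mid f$ follows because $\mathbb{R}[x_1,\dots,x_d]$ is a UFD); and (ii) the parity analysis, where the lowest-nonvanishing-component argument using $G(0)\neq 0$ correctly shows that $Gg\in\mathcal{P}_l^{\ast}(\mathbb{R}^d)$ forces every homogeneous component of $g$ to have degree congruent to $l$ modulo $2$, and the origin condition $g(0)=0$ is vacuous precisely when $l$ is odd---which is exactly what produces the $\varepsilon_{\mathcal{S}}$ discrepancy between the even and odd cases of $(2)$. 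The concluding binomial identities (Pascal telescoping for $(1)$, re-indexing of the sum for $(2)$) are routine and correct. One marginal remark: you state the divisibility lemma for $d\geq 2$, but it also holds for $d=1$ (a sphere is then the two points $\pm\rho$), so the argument in fact covers the theorem in all dimensions.
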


We consider the Haar measure $\sigma_i$ on each $S_i$. For $S_i \ne \{0\}$, we assume $|S_i|=\int_{S_i} d\sigma_i(x)$ where $|S_i|$ is the volume of $S_i$. If $S_i=\{0\}$, then we define $\frac{1}{|S_i|} \int_{S_i} f(x) d \sigma_i(x)=f(0) $. 
\begin{definition}[\cite{Neumaier-Seidel, Bannai-Bannai}]
Let $X$ be a finite set supported by $\mathcal{S} \subset \mathbb{R}^d$. Let $w(x): X \rightarrow \mathbb{R}_{>0}$ be a positive weight function. $(X,w)$ is called a Euclidean $t$-design if the following equality holds for any $f \in \mathcal{P}_t(\mathbb{R}^d)$:
$$
\sum_{i=1}^{p} \frac{w(X_i)}{|S_i|} \int_{S_i} f(x) d\sigma_i(x)=\sum_{x \in X} w(x)f(x).
$$
where $w(X_i):=\sum_{x \in X_i}w(x)$. 
\end{definition}
The largest value of $t$ for which $(X,w)$ is a Euclidean $t$-design is called the maximum strength of the design. If $p=1$, then a Euclidean $t$-design is called a weighted spherical $t$-design, and if $p=1$ and $w$ is a constant function, then a Euclidean $t$-design is called a spherical $t$-design. 

We have the Fisher type inequality for the cardinalities of Euclidean designs \cite{Bannai-Bannai-Hirao-Sawa, Delsarte-Seidel, Moller1, Moller2}. 
\begin{theorem} \label{ft for Euclidean}
\begin{enumerate}
\item Let $X$ be a Euclidean $2e$-design supported by $\mathcal{S}$. Then, 
\[
|X| \geq \dim \mathcal{P}_e(\mathcal{S}).
\]
\item Let $X$ be a Euclidean $(2e-1)$-design supported by $\mathcal{S}$. Then, 
\[
|X| \geq \begin{cases}
2 \dim \mathcal{P}_{e-1}^{\ast}(\mathcal{S})-1 \text{ \quad if $e$ is odd and $0\in X$}, \\
2 \dim\mathcal{P}_{e-1}^{\ast}(\mathcal{S}) \text{\quad \quad otherwise}.
\end{cases}
\]
\end{enumerate}
\end{theorem} 
A Euclidean $t$-design is said to be tight if it attains one of the lower bounds in Theorem \ref{ft for Euclidean}. 
If $X$ is a tight Euclidean $t$-design satisfying $0 \not \in X$, then we call $X\cup \{0\}$ an almost tight Euclidean $t$-design \cite{Bannai-Bannai-Hirao-Sawa}.    

\section{Upper bounds for an inner product set} 
In this section, we prove upper bounds for the size of an inside $s$-inner product set.
A finite $X \subset \mathbb{R}^d$ is said to be antipodal if for each $x \in X$, $-x$ is also an element of $X$. 
\begin{theorem} \label{main}
Let $\mathcal{S} \subset \mathbb{R}^d$ be a union of $p$ concentric spheres centered at the origin.
	\begin{enumerate}\setlength{\parskip}{0cm}
\item Let $X$ be an inside $s$-inner product set supported by $\mathcal{S}$. Then,
\[ |X| \leq \dim \mathcal{P}_s(\mathcal{S}). \]
\item Let $X$ be an antipodal inside $s$-inner product set supported by $\mathcal{S}$.  
Then, 
\[
|X|\leq 
\begin{cases}
2\, \dim \mathcal{P}_{s-1}^{\ast}(\mathcal{S}) + \varepsilon_{\mathcal{S}} \text{\qquad if $s$ is even},\\
2\, \dim \mathcal{P}_{s-1}^{\ast}(\mathcal{S})  \text{\qquad if $s$ is odd and $ 0 \not\in X $}.
\end{cases} 
\]
	\end{enumerate}
\end{theorem}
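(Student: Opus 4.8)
The plan is to use the method of Koornwinder \cite{Koornwinder}: to each $x\in X$ I attach a polynomial $F_x$ and show that the resulting family (or, in the antipodal case, a subfamily indexed by antipodal pairs) is linearly independent inside the relevant function space, so that $|X|$ (resp.\ the number of pairs) is bounded by its dimension.

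For part (1) I would set, for each $x\in X$,
\[
F_x(\xi):=\prod_{\alpha\in B(x)}\frac{(x,\xi)-\alpha}{(x,x)-\alpha}.
\]
The first thing to check is that this is well defined: if $\alpha=(x,y)\in B(x)$ then $\|y\|\le\|x\|$ and $y\ne x$, so Cauchy--Schwarz gives $(x,y)\le\|x\|\|y\|\le\|x\|^2=(x,x)$, with equality forcing $y=x$; hence every denominator is nonzero. This is exactly the place where the norm restriction built into $B(x)$ is essential --- without it one could have $(x,y)=\|x\|^2$ for a longer vector $y$ and the construction would break. By construction $\deg F_x=|B(x)|\le s$, so $F_x|_{\mathcal S}\in\mathcal P_s(\mathcal S)$. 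Ordering $X=\{x_1,\dots,x_n\}$ so that $\|x_1\|\le\cdots\le\|x_n\|$, I get $F_{x_i}(x_i)=1$, while for $j\ne i$ with $\|x_j\|\le\|x_i\|$ one has $(x_i,x_j)\in B(x_i)$, so $F_{x_i}(x_j)=0$. Thus $(F_{x_i}(x_j))_{i,j}$ is upper triangular with unit diagonal, hence invertible, the $F_x$ are independent in $\mathcal P_s(\mathcal S)$, and $|X|\le\dim\mathcal P_s(\mathcal S)$.

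For part (2) I exploit $X=-X$. Since $-x\in X$ and $\|-x\|=\|x\|$, the value $-\|x\|^2=(x,-x)$ always lies in $B(x)$; a Cauchy--Schwarz argument as above shows $\|x\|^2\notin B(x)$ and that $-\|x\|^2$ is attained only by $y=-x$, while the remaining set $B'(x):=B(x)\setminus\{-\|x\|^2\}$ is symmetric under $\alpha\mapsto-\alpha$. I then put
\[
\tilde F_x(\xi):=\prod_{\alpha\in B'(x)}\frac{(x,\xi)-\alpha}{(x,x)-\alpha},
\]
a polynomial in $(x,\xi)$ of degree $|B'(x)|=|B(x)|-1\le s-1$. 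By the symmetry of $B'(x)$ one checks $\tilde F_x(-\xi)=(-1)^{|B'(x)|}\tilde F_x(\xi)$ and $\tilde F_{-x}=\pm\tilde F_x$, so each antipodal pair contributes a single function. The crucial point is to land in $\mathcal P_{s-1}^{\ast}(\mathcal S)$, i.e.\ to match the parity $s-1$: if $|B'(x)|\equiv s-1\pmod 2$ I keep $\hat F_x:=\tilde F_x$; otherwise $|B'(x)|$ and $s-1$ differ in parity, so $|B'(x)|\le s-2$, and I set $\hat F_x:=(x,\xi)\,\tilde F_x$, which has degree $\le s-1$ and the correct parity. In either case $\hat F_x\in\mathcal P_{s-1}^{\ast}(\mathcal S)$, and multiplying by $(x,\xi)$ preserves $\hat F_x(x_j)=0$ at smaller-norm representatives while keeping $\hat F_x(x)\ne 0$. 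Ordering the pairs by norm and evaluating at one representative per pair again yields an invertible triangular matrix, so the number of pairs, namely $(|X|-\varepsilon_0)/2$ with $\varepsilon_0=1$ if $0\in X$ and $0$ otherwise, is at most $\dim\mathcal P_{s-1}^{\ast}(\mathcal S)$. Since $\varepsilon_0\le\varepsilon_{\mathcal S}$, this gives the stated bounds, the term $+\varepsilon_{\mathcal S}$ absorbing a possible origin in the even case and the hypothesis $0\notin X$ removing it in the odd case.

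The routine verifications --- nonvanishing of the denominators and triangularity of the evaluation matrices --- all run on the same Cauchy--Schwarz and norm-ordering idea. The main obstacle, and the only place where antipodality genuinely enters, is part (2): isolating the distinguished factor $-\|x\|^2$, checking that the leftover inner products $B'(x)$ form a symmetric set, and then carrying out the parity-and-degree bookkeeping that forces $\hat F_x$ into $\mathcal P_{s-1}^{\ast}(\mathcal S)$ rather than merely into $\mathcal P_{s-1}(\mathcal S)$. Getting the parity of $|B'(x)|$ correct relative to $s$, and separately accounting for the origin, are the steps that demand the most care.
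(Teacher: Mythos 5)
Your proposal is correct, and part (1) is word-for-word the paper's argument: the same polynomials $\prod_{\alpha\in B(x)}\frac{(x,\xi)-\alpha}{(x,x)-\alpha}$, the same Cauchy--Schwarz check that $-\|x\|^2\le\alpha<\|x\|^2$ on $B(x)$, and the same norm-ordered triangular evaluation matrix. Part (2) follows the paper's strategy as well (one polynomial per antipodal pair, parity bookkeeping to land in $\mathcal{P}_{s-1}^{\ast}(\mathcal{S})$, triangular matrix), but your construction differs in one substantive detail, and the difference is in your favor. The paper defines $B^2(y):=\{\alpha^2\mid\alpha\in B(y),\ \alpha\ne 0,\ \alpha\ne -(y,y)\}$ and uses the prefactor $\left((y,\xi)/(y,y)\right)^{(s-1)-2\lfloor (s-1)/2\rfloor}$, whose exponent depends only on the parity of $s$: it is $1$ when $s$ is even and $0$ when $s$ is odd. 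Vanishing of $f_y$ at points orthogonal to $y$ is thus delegated entirely to that prefactor, so when $s$ is odd and $0\in B(y)$ the paper's claim ``$f_y(z)=0$ for $y\ne z\in Y$, $\|z\|\le\|y\|$'' fails: for instance $X=\{\pm e_1,\pm e_2\}\subset\mathbb{R}^2$, regarded as an antipodal inside $3$-inner product set, gives $f_{e_1}=f_{e_2}=1$ and a singular matrix. Your version keeps $\alpha=0$ as an honest linear factor of $\tilde F_x$ (you only delete the distinguished value $-\|x\|^2$), and you multiply by the extra factor $(x,\xi)$ only when the parity of $|B'(x)|$ disagrees with $s-1$, in which case $|B'(x)|\le s-2$ keeps the degree under control; consequently orthogonal smaller-norm points are annihilated in every case, and your parity bookkeeping actually closes this small gap in the published proof rather than merely reproducing it. The remaining difference is cosmetic: you exclude the origin from the family and add $\varepsilon_0$ at the end, whereas the paper adjoins the constant function and bounds $|Y|$ by $\dim(\mathcal{P}_{s-1}^{\ast}(\mathcal{S})+\varepsilon_{\mathcal{S}}\,{\rm Hom}_0(\mathcal{S}))$; since $X$ is supported by $\mathcal{S}$, your $\varepsilon_0$ in fact equals $\varepsilon_{\mathcal{S}}$, so the two accountings agree.
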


\begin{proof}
(1): 
Note that $-|| x||^2 \leq \alpha <||x||^2 $ for any $\alpha \in B(x)$. 
For each $x \in X$, we define the polynomial $f_x(\xi)$
in the variables $\xi=(\xi_1,\xi_2, \ldots ,\xi_d)$: 
\begin{equation} \label{f_x}
f_x(\xi)= \begin{cases}
		\prod_{\alpha \in B(x)} \frac{(x,\xi)-\alpha}{(x,x)-\alpha}& \text{ if $B(x) \ne \emptyset$},  \\
		1 \text{ (constant)}& \text{ otherwise}.	 
		\end{cases}
\end{equation}
Then, $f_x(\xi)$ is a polynomial of degree at most $s$. 
$B(x)$ is an empty set if and only if $x \in X_1$ and $|X_1|=1$. Hence, the number of $x$ such that $f_x(\xi)=1$ (constant) is at most $1$. 
It clearly follows that $f_x(x)=1$, and   
$f_x(y)=0$ for $x\ne y \in X$ and $||y||\leq ||x||$. 

We order the elements of $X$ as $X=\{x_1,x_2, \ldots, x_n \}$
in such a way that $||x_i||\leq ||x_{i+1}||$.
Let $M$ be the $n \times n$ matrix whose $(i,j)$-entry is $f_{x_i} 
(x_j)$. Then, $M$ is an upper triangular matrix whose diagonal entries are all one. 
This implies that 
$\{ f_{x_i} \}_{i=1,2,\ldots ,n}$ are linearly independent.
Therefore (1) follows.

(2):
There exists a subset $Y^{\prime}$ such that $X\setminus \{0\}
=Y^{\prime} \cup (-Y^{\prime})$, and $Y' \cap (-Y')$ is empty. We define  
$Y:=Y^{\prime} \cup \{0 \}$
or $Y^{\prime}$, according to $0\in X$ or not. Note that $|X| = 2|Y|- \varepsilon_{\mathcal{S}}$. For each $y \in Y$, we define
$
B^2(y):=\{\alpha^2 \mid \alpha \in B(y), \alpha \ne 0, \alpha \ne -(y,y)  \}.
$
Then, $|B^2(y)| \leq \lfloor (s-1)/2 \rfloor$. For any $\alpha^2\in B^2(y)$, we have $0 < \alpha^2 < (y,y)^2$. For each $y \in Y$, we define the polynomial $f_y(\xi)$: 
\[
f_y(\xi):=\begin{cases}
1 \text{ (constant) } \text{\qquad if $y =0$}, \\
\left( \frac{(y, \xi)}{(y,y)} \right) ^{(s-1)-2\lfloor (s-1)/2 \rfloor} \prod_{\alpha^2 \in B^2(y)} \frac{(y,\xi)^2-\alpha^2}{(y,y)^2-\alpha^2} \text{ \qquad otherwise}. 
\end{cases}
\]
If $0 \in Y$, and $s$ is even, then $1 \not\in \mathcal{P}_{s-1}^{\ast}(\mathbb{R}^d)$.
Therefore, $f_y(\xi) \in \mathcal{P}_{s-1}^{\ast}(\mathbb{R}^d) + \varepsilon_{\mathcal{S}}\,  {\rm Hom}_0(\mathbb{R}^d)$. Note that 
$f_y(y)=1$, 
and $f_y(z)=0$ for $y \ne z \in Y$ and $||z|| \leq ||y||$.  
By an argument similar to that in the proof of $(1)$, $\{f_{y}\}_{y \in Y}$ are linearly independent as elements of $\mathcal{P}_{s-1}^{\ast}(\mathcal{S}) + \varepsilon_{\mathcal{S}}\,  \rm{\rm{Hom}}_0(\mathcal{S})$. Hence, 
\begin{align*}
|Y| &\leq  \dim(\mathcal{P}_{s-1}^{\ast}(\mathcal{S}) + \varepsilon_{\mathcal{S}}\,  \rm{\rm{Hom}}_0(\mathcal{S}))\\
&=
\begin{cases}
\dim \mathcal{P}_{s-1}^{\ast}(\mathcal{S})+\varepsilon_{\mathcal{S}} \text{\qquad if $s$ is even}, \\ 
\dim \mathcal{P}_{s-1}^{\ast}(\mathcal{S}) \text{\qquad if $s$ is odd}. 
\end{cases}
\end{align*}
Since $|X|= 2|Y|-\varepsilon_{\mathcal{S}}$, (2) follows. 
\end{proof}
An inside $s$-inner product set $X$ is said to be tight, if $X$ attains one of the upper bounds in Theorem \ref{main}. 
\begin{remark}
Note that the upper bounds in Theorem \ref{main} coincide with the lower bounds in Theorem \ref{ft for Euclidean} for $s=e$, 
except when $\varepsilon_{\mathcal{S}} =1$ and $s$ is even. When $\varepsilon_{\mathcal{S}} =1$ and $s$ is even, the 
cardinality of a tight inside $s$-inner product set is equal to that of an almost tight Euclidean $(2s-1)$-design.   
\end{remark}
\begin{remark}
If $s \leq 2p'-1$, then the upper bound in Theorem~\ref{main} (1) 
coincides with Deza--Frankl's upper bound $|X|\leq \binom{d+s}{s}$ for a locally $s$-inner
product set. 
\end{remark}

\section{The non-existence of tight $2$- or $3$-inner product sets} \label{non}
In this section, we prove the non-existence of tight $2$- or $3$-inner product sets supported by a union of two concentric spheres. 
First, we show several results to prove the non-existence.  
\begin{theorem}\label{Rankin}
Let $X$ be a finite set in $\mathbb{R}^d$. 
\begin{enumerate}\setlength{\parskip}{0cm}
\item If $\alpha <0$ for all $\alpha \in A(X)$, then $|X| \leq d+1$. 
\item If $\alpha \leq 0$ for all $\alpha \in A(X)$, then $|X| \leq 2 d+1$.
\end{enumerate}
\end{theorem}
\begin{proof}
(1): Note that $0 \not\in X$. For any distinct $x,y \in X$, we have $(x/||x||,y/||y||)<0$, and $x/||x|| \ne y/||y||$. 
Therefore, $|X| \leq d+1$ by the Rankin bound \cite{Rankin}. 

(2): For any distinct non-zero elements $x,y \in X$, we have $(x/||x||,y/||y||) \leq 0$, and $x/||x|| \ne y/||y||$. 
Since $X$ may contain the origin, $|X| \leq 2d+1$ by the Rankin bound \cite{Rankin}.
\end{proof}
 
\begin{remark}
We can construct infinitely many examples attaining the bound in Theorem \ref{Rankin} (1). 
Examples attaining the bound in Theorem \ref{Rankin} (2) have the following forms:
\begin{equation}
X=\{0,  a_1 e_1,  a_2 e_2, \ldots ,a_d e_d, - a_{d+1} e_1,  - a_{d+2} e_2, \ldots ,- a_{2d} e_d  \}
\end{equation}
where $a_i$ are positive real numbers, and $\{e_i\}$ is an orthonormal basis of $\mathbb{R}^d$.    
\end{remark}
 
\begin{lemma}[\cite{Bannai-Munemasa-Venkov,Venkov}] \label{ven}
The following are equivalent: 
\begin{enumerate}
\item $X$ is a spherical $t$-design in $S^{d-1}$.  
\item For any $v \in \mathbb{R}^d$ and any $1 \leq l \leq t$, 
\[
\sum_{x \in X} (v,x)^l= 
\begin{cases}
0 \qquad \text{ if $l$ is odd}, \\
\frac{(l-1)!!(d-2)!!}{(d+l-2)!!}|X|(v,v)^{\frac{l}{2}} \qquad \text{ if $l$ is even}.
\end{cases}
\]
\end{enumerate}
\end{lemma}
 
\begin{lemma} \label{key lemma2}
Let $X=\{x_1,x_2, \ldots , x_n\}$ be an inside $s$-inner product set supported by $\mathcal{S} \subset \mathbb{R}^d$. Let $\{\varphi_i \}_{1 \leq i \leq v}$ be a basis of $\mathcal{P}_s(\mathcal{S})$, and $v$ be the dimension of $\mathcal{P}_s(\mathcal{S})$. Let $M$ be the $n \times v$ matrix whose $(i,j)$-entry is $\varphi_j(x_i)$. 
Then, the rank of $M$ is $n$. In particular, if $n=v$, then $M$ is a nonsingular matrix. 
\end{lemma}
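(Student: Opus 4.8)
The plan is to reduce the statement to the nonsingularity of an auxiliary square matrix that is already produced in the proof of Theorem \ref{main}. Recall that in that proof one constructs, for each point, a ``Lagrange-type'' polynomial: after ordering $X=\{x_1,\dots,x_n\}$ so that $\|x_1\|\le\cdots\le\|x_n\|$, the polynomial $f_{x_i}\in\mathcal{P}_s(\mathcal{S})$ satisfies $f_{x_i}(x_i)=1$ and $f_{x_i}(x_j)=0$ whenever $j\ne i$ and $\|x_j\|\le\|x_i\|$. Hence the $n\times n$ matrix $N:=(f_{x_i}(x_j))_{i,j}$ is block upper triangular with identity diagonal blocks, so $N$ is nonsingular. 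This is the only nontrivial input, and it is already available.

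Next I would pass from $N$ to $M$ by a change of basis. Since $\{\varphi_j\}_{1\le j\le v}$ is a basis of $\mathcal{P}_s(\mathcal{S})$ and each $f_{x_i}$ lies in $\mathcal{P}_s(\mathcal{S})$, write $f_{x_i}=\sum_{j=1}^v c_{ij}\varphi_j$ and let $C=(c_{ij})$ be the $n\times v$ coefficient matrix. Evaluating,
\[
N_{ik}=f_{x_i}(x_k)=\sum_{j=1}^v c_{ij}\varphi_j(x_k)=\sum_{j=1}^v c_{ij}M_{kj}=(CM^{\top})_{ik},
\]
so $N=CM^{\top}$. Because $N$ is an $n\times n$ nonsingular matrix, $n=\operatorname{rank}N\le\operatorname{rank}M^{\top}=\operatorname{rank}M\le\min(n,v)$. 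Therefore $\operatorname{rank}M=n$ and $n\le v$; and when $n=v$ the matrix $M$ is square of full rank, hence nonsingular. This settles the non-antipodal case.

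For the antipodal case I would run the same factorization with the half set $Y=\{y_1,\dots\}$ in place of $X$ and a basis of $\mathcal{P}_s^{\ast}(\mathcal{S})$. The one input that does not transfer verbatim is the Lagrange polynomials: those built in Theorem \ref{main} lie in $\mathcal{P}_{s-1}^{\ast}$, whose elements have the opposite parity to those of $\mathcal{P}_s^{\ast}$, so I expect this to be the main obstacle. The plan is to build, for each $y\in Y$, a polynomial
\[
g_y(\xi)=\Big(\tfrac{(y,\xi)}{(y,y)}\Big)^{\!\kappa}\prod_{\alpha^2\in B^2(y)}\frac{(y,\xi)^2-\alpha^2}{(y,y)^2-\alpha^2}
\]
lying in $\mathcal{P}_s^{\ast}(\mathcal{S})$, where $\kappa\in\{0,1,2\}$ is chosen to match the parity of $s$ (and, when $s$ is even with $0\in B(y)$, to supply a factor vanishing on the vectors orthogonal to $y$). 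Antipodality forces $-(y,y)\in B(y)$ and pairs the remaining nonzero inner products as $\pm\alpha$, so $\deg g_y\le s$ follows from $|B(y)|\le s$. The point to verify is $g_y(z)=0$ for every $z\in Y\setminus\{y\}$ with $\|z\|\le\|y\|$: if $(y,z)=0$ the leading factor kills it, and otherwise $(y,z)^2\in B^2(y)$ since $(y,z)\ne-(y,y)$ (because $z=-y\notin Y$). Then $N=(g_{y_i}(y_j))$ is again triangular and nonsingular, and the identity $N=CM^{\top}$ concludes the argument exactly as before.
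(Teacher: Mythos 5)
Your non-antipodal argument is precisely the paper's own proof: the paper likewise expands the Lagrange-type polynomials $f_{x_i}$ from Theorem \ref{main} in the basis $\{\varphi_j\}$, forms the coefficient matrix (your $C$, the paper's $N$), and observes that $NM^{T}$ is the triangular evaluation matrix $\left(f_{x_i}(x_j)\right)$ with unit diagonal, whence $M$ has rank $n$ and $n\le v$. The genuine difference is the antipodal case. There the paper says nothing beyond ``define $f_{x_k}$ as in the proof of Theorem \ref{main}'', but those antipodal polynomials lie in $\mathcal{P}_{s-1}^{\ast}(\mathcal{S})+\varepsilon_{\mathcal{S}}\,{\rm Hom}_0(\mathcal{S})$, whose nonconstant homogeneous components have parity opposite to that of the elements of $\mathcal{P}_{s}^{\ast}(\mathcal{S})$; since $\mathcal{S}$ is symmetric about the origin, such functions cannot be expanded in a basis of $\mathcal{P}_{s}^{\ast}(\mathcal{S})$, so the paper's expansion step does not carry over verbatim---exactly the obstacle you identified. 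Your replacement polynomials $g_y$ with the exponent $\kappa$ matching the parity of $s$ do lie in $\mathcal{P}_{s}^{\ast}(\mathbb{R}^d)$, with degree $\kappa+2|B^{2}(y)|\le s$ thanks to the pairing $\pm\alpha$ of the nonzero inner products; moreover, taking $\kappa\ge 1$ makes $g_y$ vanish at points of $Y$ orthogonal to $y$, which the paper's odd-$s$ polynomials (exponent $0$) do not, so your version actually supplies the triangularity that the factorization needs. In short: same decomposition and same key nonsingularity input, but your treatment of the antipodal half of the lemma is more careful than the paper's and is what the statement, as written with $\mathcal{P}_s^{\ast}(\mathcal{S})$, requires. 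Two details worth adding: say explicitly that $g_{0}:=1$ when $0\in Y$ (this lies in $\mathcal{P}_{s}^{\ast}$ only for even $s$, consistent with the paper's reduction to odd $k\le s$ when $0\in X$), and note that in the antipodal case $M$ is $|Y|\times v$, so the conclusion reads $|Y|\le\dim\mathcal{P}_{s}^{\ast}(\mathcal{S})$ rather than a bound on $|X|$ itself.
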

\begin{proof}
Let $f_{x_k}(\xi)$ be the $d$-variable polynomial in (\ref{f_x}). 
Since $f_{x_k}(\xi)$ is of degree at most $s$, we can write $f_{x_k}(\xi)=\sum_{i=1}^v a_i^{(x_k)} \varphi_i(\xi)$, where $a_i^{(x_k)}$ are real numbers. Let $N$ be the $n \times v$ matrix $(a_j^{(x_i)})_{i,j}$. Then, $NM^T$ is an upper triangular matrix whose diagonal entries are all $1$, and hence it is of rank $n$. This implies that $M$ is of rank $n$.  
\end{proof}

Define $F_X(t)=\sum_{\alpha \in A(X)} (t- \alpha)$ for $X \subset \mathbb{R}^d$. 

\begin{lemma} \label{f_i}
Let $X$ be an $s$-inner product set in $\mathbb{R}^d$.  
We have the expression $F_X(t)= \sum_{i=0}^s f_i t^i$, where $f_i$ are real numbers.  
If $|X| >2d+1$, then there exists $i$ such that $f_i < 0$. 
\end{lemma}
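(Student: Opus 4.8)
The plan is to argue by contraposition: I will assume that $f_i \geq 0$ for every $i$ and deduce that $|X| \leq 2d+1$, which is precisely the negation of the hypothesis $|X| > 2d+1$. The whole argument rests on the elementary fact that non-negativity of the coefficients of $F_X$ controls the location of its real roots, together with the observation that these roots are exactly the elements of $A(X)$, that is, the inner products $(x,y)$ for distinct $x,y \in X$.

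Concretely, since $F_X(t) = \prod_{\alpha \in A(X)}(t-\alpha)$ is a monic polynomial, its leading coefficient is $f_s = 1 > 0$. Hence, under the assumption that every $f_i \geq 0$, for each real $t > 0$ we would have
\[
F_X(t) = \sum_{i=0}^s f_i t^i \geq f_s t^s = t^s > 0,
\]
so that $F_X$ has no positive real root. Because the values $\alpha \in A(X)$ are real numbers and are exactly the roots of $F_X$, this forces $\alpha \leq 0$ for every $\alpha \in A(X)$; in other words, all inner products between distinct points of $X$ are non-positive.

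At that stage I would invoke the Rankin bound for $\mathbb{R}^d$, namely Theorem \ref{Rankin} (2), which asserts that if $\alpha \leq 0$ for all $\alpha \in A(X)$ then $|X| \leq 2d+1$. This establishes the contrapositive and hence the lemma. I do not anticipate a genuine obstacle here: the single point requiring care is to exploit that $F_X$ is \emph{monic}, since it is the strict positivity of the leading coefficient that guarantees $F_X(t) > 0$ on the positive axis and thereby excludes positive roots. The argument is thus a short sign analysis feeding into the already-proven generalization of the Rankin bound.
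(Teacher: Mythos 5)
Your proposal is correct and follows essentially the same route as the paper: both arguments combine the sign analysis of the coefficients of the monic polynomial $F_X$ (nonnegative coefficients force $F_X(t)>0$ for $t>0$, hence no positive root) with the Rankin bound for $\mathbb{R}^d$ (Theorem \ref{Rankin} (2)). The paper phrases it as a proof by contradiction while you argue by contraposition, but the mathematical content is identical.
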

\begin{proof}
By Theorem \ref{Rankin}, there exists $\alpha \in A(X)$ such that $\alpha > 0$. 
If $f_i \geq 0$ for all $0 \leq i \leq s$, then $\sum_{i=0}^s f_i t^i$ is monotonically increasing for $t>0$. 
Since $f_0 \geq 0$, this contradicts $\alpha>0$.  
\end{proof}

\begin{lemma} \label{s=2}
Let $X$ be a tight $s$-inner product set supported by a union of two concentric spheres, which dose not contain the origin, 
and is not antipodal, for $s=2,3$. We have the form $F_X(t)=\sum_{i=0}^s f_i t^i$. 
Then, 
\[
|X_1|=\sum_{i: f_i < 0} h_i, \qquad |X_2|=\sum_{i: f_i > 0} h_i, 
\]
where $h_i= \dim{\rm Hom}_i(R^d)$. 
\end{lemma}
\begin{proof}
Define $\varphi_{\lambda}(x):=\sqrt{\binom{|\lambda|}{\lambda_1,\lambda_2, \ldots, \lambda_d}}x_1^{\lambda_1}x_2^{\lambda_2}
 \cdots 
x_d^{\lambda_d}$ for
 $x=(x_1,x_2,\ldots, x_d) \in \mathbb{R}^d$,
$\lambda=(\lambda_1,\lambda_2,\ldots,\lambda_d) \in \mathbb{Z}_{\geq 0}^d$, and $|\lambda |=\sum_{i=1}^d \lambda_i$. 
Let $H_l$ be the matrix indexed by $X$ and $\{ \varphi_{\lambda} \mid |\lambda|=l \}$, whose $(x,\lambda)$-entry is $\varphi_{\lambda}(x)$. 
Let $\mathcal{H}=[H_0,H_1,\ldots,H_s]$ and
 $M=f_0 I_1 \oplus f_1 I_{h_1} \oplus \cdots \oplus f_s I_{h_s}$ (a direct sum), where $I_{k}$ is the identity matrix of size $k$.  
Then, we may write 
\begin{equation} \label{eq_key}
\mathcal{H}M\mathcal{H^T} = \left[\begin{array}{cc}  
F_X(r_1^2) I_{|X_1|} & 0\\
 0                              &      F_X(r_2^2) I_{|X_2|}\\
\end{array}
\right].
\end{equation}
Since $|X|=\sum_{i=0}^s h_i$ for $s=2,3$, and Lemma \ref{key lemma2}, $\mathcal{H}$ is a nonsingular matrix. 
The numbers of positive, negative and zero eigenvalues of $M$ are equal to those of the right hand side of (\ref{eq_key}) respectively. Note that $F_X(r_2^2)>0$. By Theorem \ref{Rankin} and Lemma \ref{f_i}, some $f_i$ are negative. This implies that $F_X(r_1^2)<0$. Therefore, the lemma follows.  
\end{proof}
\begin{remark}
Since $F_X(r_1^2)<0$, odd number of inner products are greater than $r_1^2$.  
Therefore,  for $s=2$ (resp.\ $s=3$), $X_1$ is a $1$-inner product set (resp.\ $2$-inner product set) or $|X_1|=1$. 
There are a few possible pairs $|X_1|$ and $|X_2|$ by Lemma \ref{s=2}. 
The signs of $f_i$ give some conditions of inner products. 
\end{remark}
%Define $n(x, X, \alpha)=|\{y \in X \mid (x,y)=\alpha \}|$.
The following are the main theorems in this section.  
\begin{theorem} 
There does not exist a tight $2$-inner product set, that is supported by a union of two concentric spheres and is not antipodal, for any $d\geq 2$. 
\end{theorem}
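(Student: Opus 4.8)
The plan is to assume a tight $2$-inner product set $X=X_1\cup X_2$ of this kind exists and to contradict each of the three possibilities for $(|X_1|,|X_2|)$ listed in Lemma~\ref{s=2}. Write $A(X)=\{\alpha,\beta\}$ with $\alpha>\beta$; from the proof of Lemma~\ref{s=2} we already have $\alpha>0$ and $(r_2^2-\alpha)(r_2^2-\beta)>0$, and since distinct points of $X_2$ lie on the sphere $S_2$ one has the strict inequality $\alpha<r_2^2$. In each of the three cases $|X_2|>d+1$, so $X_2$ cannot be a $1$-inner product set (a spherical $1$-distance set spans a regular simplex, hence has at most $d+1$ points); thus $X_2$ is a genuine $2$-inner product set on $S_2$ and the spherical theory of Section~2 applies to it. I treat the case $(|X_1|,|X_2|)=(d,1+\binom{d+1}{2})$, where $\alpha>0$ and $\beta>0$, first: rescaling $X_2$ onto the unit sphere gives a $2$-inner product set whose inner products $\alpha/r_2^2,\beta/r_2^2$ have positive sum, so Corollary~\ref{musin} forces $|X_2|\le\binom{d+1}{2}$, contradicting $|X_2|=1+\binom{d+1}{2}$.

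Next I treat the case $(|X_1|,|X_2|)=(d+1,\binom{d+1}{2})$, where $\alpha>0$, $\beta<0$, $\alpha+\beta>0$. Here the inner sphere carries $d+1$ points whose mutual inner products all equal $\beta$: indeed $\beta<r_1^2<\alpha$ by the Remark after Lemma~\ref{s=2}, so no off-diagonal inner product inside $X_1$ can equal $\alpha$. Singularity of the $(d+1)\times(d+1)$ Gram matrix $(r_1^2-\beta)I+\beta J$ (its points lie in $\mathbb{R}^d$) then forces $\beta=-r_1^2/d$, so $X_1$ is a regular simplex and in particular a spherical $2$-design, giving $\sum_{x\in X_1}x=0$ and $\sum_{x\in X_1}xx^{T}=\frac{(d+1)r_1^2}{d}I$. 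Testing these two identities against a fixed $y\in X_2$, and letting $k$ be the number of $x\in X_1$ with $(x,y)=\alpha$, I obtain
\[
k\alpha+(d+1-k)\beta=0,\qquad k\alpha^{2}+(d+1-k)\beta^{2}=\frac{(d+1)r_1^2r_2^2}{d}.
\]
Eliminating $k$ and $\beta$ (routine algebra) collapses the second identity to $r_2^2=\alpha$, which is impossible because $\alpha<r_2^2$.

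Finally I treat the case $(|X_1|,|X_2|)=(1,d+\binom{d+1}{2})$, where $\alpha>0$, $\beta<0$, $\alpha+\beta<0$, so that $X_1=\{p\}$ is a single point and $|X_2|=\dim\mathcal{P}_2(S^{d-1})$ is the maximal size of a spherical $2$-distance set. By the Delsarte--Goethals--Seidel theory an extremal $2$-distance set is a tight spherical $4$-design, hence of strength at least $3$; consequently $\sum_{y\in X_2}(p,y)=0$ and $\sum_{y\in X_2}(p,y)^{3}=0$, the relevant spherical integrals vanishing by parity. Writing $N=|X_2|$ and letting $k$ be the number of $y$ with $(p,y)=\alpha$, these read $k\alpha+(N-k)\beta=0$ and $k\alpha^{3}+(N-k)\beta^{3}=0$; substituting the first into the second gives $k\alpha(\alpha^{2}-\beta^{2})=0$, and since $k\ge1$ and $\alpha>0$ this forces $\alpha=-\beta$, contradicting $\alpha+\beta<0$.

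I expect the main obstacle to be this last case. The inertia/rank method of Lemma~\ref{s=2} only certifies that an extremal $2$-distance set is a spherical $2$-design, and the two moment identities available from a $2$-design are not by themselves contradictory; the contradiction genuinely requires the vanishing of the \emph{odd} degree-$3$ moment, that is, strength at least $3$. Thus the one non-elementary ingredient is the classical fact that a $2$-distance set attaining $\dim\mathcal{P}_2(S^{d-1})$ is a spherical $4$-design, which must be quoted rather than recovered from the linear algebra of Section~2. The other two cases, by contrast, reduce respectively to Musin's bound (Corollary~\ref{musin}) and to the rigidity of the regular simplex, both elementary.
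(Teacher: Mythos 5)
Your handling of the three cases of Lemma~\ref{s=2} is correct and, in fact, essentially identical to the paper's own proof: the case $(d,1+\binom{d+1}{2})$ falls to Corollary~\ref{musin} after rescaling, the case $(d+1,\binom{d+1}{2})$ falls to the rigidity of the regular simplex plus its degree-$1$ and degree-$2$ moments (the paper's three equations, with $r_1=1$ and $\beta=-1/d$, yield exactly your conclusion $\alpha=r_2^2$), and the case $(1,d+\binom{d+1}{2})$ falls to the degree-$1$ and degree-$3$ moments of the tight spherical $4$-design $X_2$. However, there is a genuine gap: you never treat the possibility $0\in X$. In this paper a ``union of two concentric spheres'' may have $r_1=0$, i.e., $S_1=\{0\}$ (see Section 2), a tight set supported by $\{0\}\cup S_2$ has the same cardinality $\binom{d+2}{2}$, and such a configuration is not excluded by the non-antipodality hypothesis. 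Lemma~\ref{s=2}, on which your entire trichotomy rests, explicitly assumes that $X$ does not contain the origin, so this case lies outside your case analysis altogether.

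Nor can it be absorbed into your $(|X_1|,|X_2|)=(1,d+\binom{d+1}{2})$ argument: if the single point $p$ is the origin, then $(p,y)=0$ for every $y\in X_2$, both identities $\sum_y(p,y)=0$ and $\sum_y(p,y)^3=0$ hold trivially, and no contradiction follows (your step ``$k=0$ forces $\beta=0$'' is vacuous, since $\beta=0$, or rather $0\in A(X)$, is exactly what happens here). The paper disposes of this case separately, before invoking Lemma~\ref{s=2}: if $0\in X$ then $0\in A(X)$; on the other hand $X_2$ is a tight spherical $2$-distance set, hence a tight spherical $4$-design, and $A(X_2)$, which must coincide with $A(X)$ by cardinality, cannot contain $0$ (its two inner products are $\frac{-1\pm\sqrt{d+3}}{d+2}\,r_2^2$; alternatively, $\sum_{x\in X_2}x=0$ forces some negative inner product, while $A(X)=\{\alpha,0\}$ with $\alpha>0$ by Theorem~\ref{Rankin}). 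You need to add an argument of this kind; with it, your proof becomes complete and coincides with the paper's.
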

\begin{proof}
Let $A(X)=\{\alpha, \beta\}$ where $\alpha > \beta$. 
If $X$ contains the origin, then $X_2$ is a tight spherical $4$-design. 
Then $A(X_2)$ does not contain zero, a contradiction.  

Assume $|X_1|=1$ and $|X_2|=d+\binom{d+1}{2}$. Then $X_2$ is a tight spherical $4$-design. 
Applying Lemma \ref{ven} to $X_2$ and $v \in X_1$, we obtain a contradiction. 

If $|X_1|=d$ and $|X_2|=1 + \binom{d+1}{2}$, then $\alpha + \beta > 0$. We have $|X_2| \leq \binom{d+1}{2}$ \cite{Musin, nozaki-shinohara}, a contradiction. 

Assume $|X_1|=d+1$ and $|X_2|=\binom{d+1}{2}$. 
Since $X_1$ is a $1$-inner product set and $|X_1|=d+1$, $X_1$ is a tight spherical $2$-design. Without loss of generality, we may assume $r_1=1$ and hence $\beta=-1/d$. Applying Lemma \ref{ven} to $X_1$ and $v \in X_2$, 
we obtain a contradiction. 
\end{proof}

\begin{theorem} 
There does not exist a tight $3$-inner product set, that is supported by a union of two concentric spheres and is not antipodal, for any $d\geq 2$. 
\end{theorem}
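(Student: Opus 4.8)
The plan is to mirror the strategy used in the $s=2$ case, now working from the four possibilities for $(|X_1|,|X_2|)$ supplied by Lemma~\ref{s=3}. First I would dispose of the case $0 \in X$: if $X$ contained the origin, then $X_1 = \{0\}$ and $X_2$ would be a tight spherical $6$-design (since $|X| = \dim \mathcal{P}_3(\mathbb{R}^d)$ and the bound is attained by the nonzero sphere), but a tight spherical $6$-design has $0 \notin A(X_2)$, forcing $B(x)$ for the origin to be empty in a way that contradicts supportedness; so $X$ does not contain the origin and Lemma~\ref{s=3} applies directly. For each of the four cardinality splits I would then extract the structure of $X_1$ and $X_2$ as inner product sets on their respective spheres and look for a numerical contradiction, exactly as in the preceding theorem.

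The main tool in each case is the moment method: whichever of $X_1, X_2$ is forced to be a tight spherical design, I would write out the design equations $\sum_{x} (x,x_0)^k = c_k$ for the appropriate powers $k$ (using that a tight spherical $2e$-design has inner-product moments determined through degree $2e$, as in \cite{Bannai-Munemasa-Venkov}) against a fixed point $x_0$ of the other sphere, with the unknowns being the multiplicities $n_\alpha, n_\beta, n_\gamma$ of each inner-product value. Concretely: when $|X_2|$ equals one of the values that makes $X_2$ a tight spherical $6$-design (the split $(\binom{d+1}{2},\,1+d+\binom{d+2}{3})$, which by Lemma~\ref{s=3} forces $\alpha,\beta>0$, $\gamma<0$), I would derive equations in $n_\alpha \alpha^j + n_\beta \beta^j + n_\gamma \gamma^j$ for $j = 1, 3, 5$ equal to zero, and show the only solution has some $n < 0$ or all $n = 0$, a contradiction. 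When instead $X_1$ is a tight spherical design of small strength, I would normalize $r_1 = 1$, use the corresponding low-degree moment equations against $x_2 \in X_2$, and solve for the forced value of $\alpha$ (or $r_2^2$), reaching a contradiction just as the $(d+1,\binom{d+1}{2})$ case did for $s=2$.

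For the splits where the relevant piece is only a $2$-inner product set rather than a full tight design, I expect to lean on Corollary~\ref{musin} and the Rankin bound of Theorem~\ref{Rankin}. For instance, the split $(1+d,\,\binom{d+1}{2}+\binom{d+2}{3})$ forces $\alpha+\beta+\gamma<0$ with $X_1$ a $2$-inner product set of size $d+1$; here I would check whether the sign conditions on the symmetric functions are compatible with $X_1$ lying on a single sphere with a negative-sum inner product spectrum, applying Corollary~\ref{musin} (or its Rankin-type refinement) to bound $|X_1|$ below the claimed value. The hope is that in each of the four cases either a design moment computation collapses the multiplicities or a Fisher/Rankin cardinality bound is violated.

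The hard part will be the case in which $X_2$ is genuinely a tight spherical $6$-design with three distinct nonzero inner products against the other sphere: the resulting linear system in $n_\alpha, n_\beta, n_\gamma$ has three equations ($j=1,3,5$) and three unknowns, so it does not automatically collapse the way the two-value systems did for $s=2$, and I would need to exploit the additional constraint that $n_\alpha + n_\beta + n_\gamma = |X_2|$ together with the known second and fourth moments $r_2^{2j}\,|X_2|/\text{(design constants)}$ to pin down the $n_k$ and the radius ratio, then show these are inconsistent with $\alpha,\beta,\gamma$ being the actual inner products realized between two fixed spheres. Extracting the contradiction there will require carefully tracking the tight-design moment constants in dimension $d$ rather than the clean cancellations available in the degree-$4$ setting.
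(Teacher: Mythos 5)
Your overall skeleton (run through the four splits of Lemma~\ref{s=3}, use moment equations against a point of the other sphere when one layer is a tight spherical design) matches the paper, and your treatment of the split $(d+\binom{d+1}{2},\,1+\binom{d+2}{3})$, where $X_1$ is a tight spherical $4$-design, is in the right spirit. But there is a genuine gap exactly where you announce the ``hard part.'' First, you misassign the cases: the split in which $X_2$ is a tight spherical $6$-design is $(1+d,\,\binom{d+1}{2}+\binom{d+2}{3})$, since $\binom{d+1}{2}+\binom{d+2}{3}=\dim\mathcal{P}_3(S^{d-1})$, not the split $(\binom{d+1}{2},\,1+d+\binom{d+2}{3})$ you attach it to (these sizes agree only when $d=2$). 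Second, and more importantly, no degree-$5$ moment computation is needed at all: the paper kills that case in one line by the Bannai--Damerell classification, namely that tight spherical $6$-designs do not exist except the regular heptagon on $S^1$. The same fact disposes of the case $0\in X$; your own argument there (``$B(x)$ for the origin is empty, contradicting supportedness'') does not parse, since $B(0)$ is always empty --- the real point is that a tight spherical $6$-design has $0\notin A(X_2)$, so adjoining the origin creates a fourth inner product. Without the classification fact, your plan for the $6$-design case is an open-ended system in $n_\alpha,n_\beta,n_\gamma$ that you explicitly leave unresolved, so the proof is incomplete.

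The remaining two splits also need tools your proposal does not supply. For $(\binom{d+1}{2},\,1+d+\binom{d+2}{3})$ --- where $X_2$ is \emph{not} a tight $6$-design, since $1+d<\binom{d+1}{2}$ for $d\geq 3$ --- the paper expands the cubic $(t-\alpha)(t-\beta)(t-\gamma)=\sum_{k=0}^3 a_k G_k^{(d)}(t)$ and applies Theorem~\ref{gene musin}: Lemma~\ref{s=3} gives $\alpha+\beta+\gamma>0$, hence $a_2=-2(\alpha+\beta+\gamma)/(d(d+2))<0$, hence $|X_2|\leq 1+\binom{d+2}{3}$, a contradiction. Corollary~\ref{musin} is only the quadratic special case and cannot do this; likewise, your plan of bounding $|X_1|$ for the split $(1+d,\,\binom{d+1}{2}+\binom{d+2}{3})$ can never produce a contradiction, because $|X_1|=d+1$ already sits below every such bound. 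Finally, the split $(1+\binom{d+1}{2},\,d+\binom{d+2}{3})$ is eliminated by pure sign algebra from the conditions of Lemma~\ref{s=3}: there $\beta<0$, $\gamma<0$ and $\alpha+\beta+\gamma>0$ force $\alpha\beta+\beta\gamma+\gamma\alpha<-(\beta+\gamma)^2+\beta\gamma=-\beta^2-\beta\gamma-\gamma^2<0$, contradicting $\alpha\beta+\beta\gamma+\gamma\alpha>0$. Your remark about ``checking the sign conditions on the symmetric functions'' points in this direction, but this inequality is the actual content, and it needs to be exhibited rather than hoped for.
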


\begin{proof}
Let $A(X)=\{\alpha, \beta, \gamma \}$ where $\alpha > \beta> \gamma$. If $X$ contains the origin, then $X_2$ is a tight spherical $6$-design. 
A tight spherical $6$-design does not exist, except for the regular heptagon on $S^1$. 
Thus, no tight $3$-inner product set exists on $\{0\}\cup S_2$. 

We deal with the non-trivial cases. 

Assume $|X_1|=d+\binom{d+1}{2}$ and $|X_2|=1+ \binom{d+2}{3}$. 
Since $X_1$ is a $2$-distance set and $|X_1|=d+\binom{d+1}{2}$, $X_1$ is a tight spherical $4$-design. 
We may assume $r_1=1$, and $A(X_1)=\{\frac{-1 \pm \sqrt{d+3}}{d+2} \}$.
Applying Lemma \ref{ven} to $X_1$ and $v \in X_2$, we obtain a contradiction. 
 
Assume $|X_1|=1+\binom{d+1}{2}$ and $|X_2|=d+\binom{d+2}{3}$. 
Then, $\alpha>0$, $\beta<0$, $\gamma<0$, $\alpha+\beta+\gamma > 0$, and $\alpha \beta+\beta \gamma+\gamma \alpha > 0$.
We have $\alpha \beta+ \beta \gamma + \gamma \alpha <  - (\beta+ \gamma)^2 + \beta \gamma= -\beta^2 - \beta \gamma -\gamma^2 < 0$, a contradiction. 

Assume $|X_1|=1+d$ and $|X_2|=\binom{d+1}{2}+\binom{d+2}{3}$. Then, $X_2$ is a tight spherical $6$-design, a contradiction. 

Assume $|X_1|=\binom{d+1}{2}$ and $|X_2|=1+d+\binom{d+2}{3}$. Then, $\alpha+\beta+\gamma>0$. We have the Gegenbauer expansion $(t-\alpha)(t-\beta)(t-\gamma)=\sum_{k=0}^3 a_k G_k^{(d)}(t)$ where $a_2=-2(\alpha+\beta+\gamma)/d(d+2)$. 
Therefore, $|X_2| \leq \binom{d+2}{3}+1$ \cite{nozaki-shinohara}, a contradiction.

\end{proof}
\section{Tight inside $s$-inner product sets} \label{ex_sec}
In this section, we introduce several examples of tight inside inner product sets, and their maximum achievable strengths as Euclidean designs whose weight functions are constant on each $X_i$. 
 
A tight $1$-inner product set with a negative inner product is identified with a tight Euclidean $2$-design \cite{Bannai-Bannai-Suprijanto}. 

A tight locally $2$-inner product set is identified with a tight or almost tight Euclidean $3$-design \cite{Etsuko2}.   

A tight inner product set on a sphere is a tight spherical design \cite{Delsarte-Goethals-Seidel}. 
The union of a tight spherical $(4m-1)$-design \cite{Delsarte-Goethals-Seidel} and the origin is a tight inside $2m$-inner product set.   
 
We have several non-trivial examples of tight inside $s$-inner product sets (Tables $1,2,3,4$).  
A lot of known tight Euclidean designs are tight inside $s$-inner product sets. 
Modifying the radii of the spheres supporting tight Euclidean design to reduce the number of the inner products, we may obtain a tight inside inner product set (we write M in the tables).

\begin{table}
{\small T: tight design,  Al: almost tight design, M: modified tight design\\
L: locally inner product set, A: antipodal set, $B(x)=B(X_i)$ for each $x \in X_i$} \\
{ \small 
\begin{tabular}{|c|c|c|c|c|c|c|c|c|c|c|} 
 \hline
$s$ &$d$&$t$&$|X_1|$&$|X_2|$&$B(X_1)$                     &$B(X_2)$& $r_2$   & $w_2$ & remark & ref.      \\  \hline

$2$   &$2$  & $4$ &$3$      & $3$     &$-\frac{1}{2}$               &$-2, 1$ &$2$ &     $\frac{1}{8} $ &T & \cite{Bannai-Bannai, Etsuko}   \\
    &$4$  & $4$ &$10$     &$5 $     &$-\frac{2}{3}, \frac{1}{6}$&$-\frac{3}{2}, 1$&$\sqrt{6}$& $\frac{1}{27}$&T & \cite{Etsuko} \\
    &$5$  &$ 4$ &$6$      &$15$     &$-\frac{1}{5}$               &$-\frac{4}{5}, \frac{2}{5}$&$\sqrt{\frac{8}{5}}$ &$\frac{1}{2}$ &T& \cite{Etsuko}\\
    &$4$  & $4$ &$6$      &$9$      &$-\frac{1}{2}, 0$&$-1, \frac{1}{2}$ &$\sqrt{2}$ & $\frac{1}{3}$&T&\cite{Etsuko}\\
    &$22$ &$3$  & $253$   & $23$    &$-\frac{13}{56}, \frac{5}{28}$ &$-\frac{16}{7}, 1$ &$4 \sqrt{\frac{22}{7}}$ & $\frac{3}{512}$ & M & \cite{Etsuko} \\ 
    &$d\geq 3$  & $2$ &$\frac{d+1}{2}$ & $d+1$& $-\frac{2}{d-1}, \frac{d-3}{2(d-1)}$ & $-\frac{2}{d-1}, 1$& $\sqrt{\frac{2d}{d-1}}$ 
    & any & non & Ex.\ \ref{ex1} \\
    &$d\geq 5$ & $3$& $\frac{d+1}{2}$ & $d+1$& $-\frac{2}{d-1}, \frac{d-3}{2(d-1)}$& $-\frac{d-1}{2}, 1$& $\sqrt{\frac{d(d-1)}{2}}$& $\frac{d-3}{(d-1)^3}$&$d=6$: M & Ex.\ \ref{ex2}\\
 \hline
3
&$3$&$5$&$6$&$8$&$-1, 0$&$-3, \pm 1$&$\sqrt{3}$ & $\frac{1}{8}$ & A, L, T & \cite{Bajnok2, Bajnok, Etsuko2}\\
&$5$&$5$&$12$&$20$&$-1, \pm \frac{1}{5}$ &$-\frac{9}{5}, \pm \frac{3}{5} $ &$\frac{3}{\sqrt{5}}$  &$\frac{1}{3}$& A, T& \cite{Etsuko2}\\
&$5$&$5$&$20$&$12$&$-1, \pm \frac{1}{3}$ &$-5, \pm 1 $ &$\sqrt{5}$ &$\frac{1}{27}$ &A, T&\cite{Etsuko2} \\
&$6$&$5$&$12$&$32$&$-1, 0$ &$-\frac{3}{2}, \pm\frac{1}{2}$ &$\sqrt{\frac{3}{2}}$ &$\frac{1}{2}$ & A, T
& \cite{Etsuko2} \\
&$22$&$5$&$2025$&$275$& $-\frac{4}{11}, \-\frac{1}{44}, \frac{7}{22}$& $-\frac{9}{11}, -\frac{3}{22}, \frac{6}{11}$& $\frac{6}{\sqrt{11}}$ &$\frac{3}{32}$ & M & \cite{Bannai-Bannai-Shigezumi} \\
\hline
$4$
& $2$ & $7$ &   $6$   &   $6$   &$-1, \pm \frac{1}{2}$&$-3,0,\pm \frac{3}{2}$  & $\sqrt{3}$ & $\frac{1}{27}$ &  A, T & \cite{Bajnok2} \\ 
&$4$&$7$&$24$&$24$&$-1, 0, \pm \frac{1}{2}$&$-2, 0, \pm 1$ &$\sqrt{2}$ &$\frac{1}{8}$ & A, T &\cite{Bannai-Bannai2}\\
&$7$&$7$&$56$&$126$&$-1,  \pm \frac{1}{3}$&$-\frac{4}{3}, 0, \pm \frac{2}{3}$ &$\frac{2}{\sqrt{3}} $  &$\frac{1}{2}$ & A, T& \cite{Bannai-Bannai2}\\
&$7$&$7$&$126$&$56$&$-1, 0, \pm \frac{1}{2}$&$-3, 0, \pm 1$ &$\sqrt{3} $ &$\frac{1}{32}$ & A, T &\cite{Bannai-Bannai2} \\
\hline
\end{tabular}}\\ 
\caption{$p=2$}
\end{table}

\begin{table} 
{\small
\begin{tabular}{|c|c|c|c|c|c|c|c|c|c|c|c|c|c|c|}  \hline
$s$    & $d $    &$t$ &$|X_1|$&$|X_2|$&$|X_3|$&$B(X_1)$&$B(X_2)$ & $B(X_3)$ &$r_2$&$r_3$      &$w_2$&$w_3$ &rem. & ref. \\  \hline
$4$&$3$&$7$&$12$&$6$&$8$&$-1, 0, \pm \frac{1}{2}$&$-2, 0, \pm 1$ &$-6, 0, \pm 2$ &$\sqrt{2}$&$\sqrt{6}$ & $\frac{5}{32}$& $\frac{1}{256}$& A, T&\cite{Bajnok}\\ \hline
\end{tabular}
}
\caption{$p=3$ ($p=4, 0 \in X$)}
\end{table}

\begin{table} 
{\small
\begin{tabular}{|c|c|c|c|c|c|c|c|c|c|c|c|}  \hline
$s$&$d$&$t$&$|X_1|$&$|X_2|$&$B(X_1)$                       &$B(X_2)$         &$r_2$    & $w_2$  &rem.  &  ref.  \\  \hline
$4$  & $2$ & $7$ &   $6$   &   $6$   &$-1,0, \pm \frac{1}{2}$&$-3,0,\pm \frac{3}{2}$  & $\sqrt{3}$ & $\frac{1}{27}$ &  A, Al & \cite{Bajnok2, Bannai-Bannai-Hirao-Sawa} \\ 
&$4$&$7$&$24$&$24$&$-1, 0, \pm \frac{1}{2}$&$-2, 0, \pm 1$ &$\sqrt{2}$ &$\frac{1}{8}$ & A, Al &\cite{Bannai-Bannai2,Bannai-Bannai-Hirao-Sawa}\\
&$7$&$7$&$56$&$126$&$-1, 0, \pm \frac{1}{3}$&$-\frac{4}{3}, 0, \pm \frac{2}{3}$ &$\frac{2}{\sqrt{3}} $  &$\frac{1}{2}$ & A, Al& \cite{Bannai-Bannai2,Bannai-Bannai-Hirao-Sawa}\\
&$7$&$7$&$126$&$56$&$-1, 0, \pm \frac{1}{2}$&$-3, 0, \pm 1$ &$\sqrt{3} $ &$\frac{1}{32}$ & A, Al &\cite{Bannai-Bannai2,Bannai-Bannai-Hirao-Sawa} \\ \hline
\end{tabular}}
\caption{$p=3$, $0 \in X$}
\end{table}

\begin{table}
{\small
\begin{tabular}{|c|c|c|c|c|c|c|c|c|c|c|c|c|} \hline
$s$&$d$&$t$&$|X_1|$&$|X_2|$&$|X_3|$&$B(X_1)$ & $B(X_2)$ & $B(X_3)$&$r_2$ &$r_3$ & rem. &ref. \\  \hline
$2$  & $2$ & $0$ & $2$     & $1$     & $2$     & $0$       & $0, 1$     & $0,2$     &$\sqrt{2}$ & $2$ &non&Ex.\ \ref{ex3} \\ \hline
\end{tabular}}
\caption{$p=4$, $0 \in X$}
\end{table}

\begin{example}[Table 1]\label{ex1}
Let $X_2$ be the $d$-dimensional regular simplex in $\mathbb{R}^d$ and  
$
X_1:=\{ (x+y)/2 \mid x , y \in X_2, x \ne y \}.
$
\end{example}

\begin{example}[Table 1]\label{ex2}
Let $X_2$ be the $d$-dimensional regular simplex in $\mathbb{R}^d$ and  
$
X_1:=\{ -(x+y)/(d-1) \mid x, y \in X_2, x \ne y \}.
$
\end{example}

\begin{example}[Table 4]\label{ex3}
Let $X_1:=\{ (0,0) \}$, $X_2:=\{ (1,0),(0,1) \}$, $X_3:=\{ (1,1) \}$ and $X_4:=\{(2,0),(0,2) \}$. 
\end{example} 
\begin{remark}
We expect a closely relationship between tight inside inner product sets and tight Euclidean designs. 
We can observe that a lot of examples of tight Euclidean designs have the structure of tight inside inner product sets. 
However, there are tight Euclidean designs which are not related with tight inside inner product sets. 
Tight or almost tight Euclidean designs in $\mathbb{R}^2$ \cite{Bajnok, Bannai-Bannai-Hirao-Sawa} except the examples in the tables, 
and tight Euclidean $t$-designs with $(t,d,|X_1|,|X_2|)=(4,22,33,243)$ \cite{Etsuko}, $(t,d,|X|,p)=(4,4,22,3)$ \cite{Hirao-Sawa-Zhou} cannot become tight inside $s$-inner product sets even if we modify the radii. 
Conversely, some tight inside inner product sets are not even $1$-design. 
Tight $1$-inner product sets with a non-negative inner product and Example $\ref{ex3}$ are not even $1$-designs. 
To construct of a generalized theory of the sphere due to Delsarte-Goethals-Seidel \cite{Delsarte-Goethals-Seidel}, 
we have to give an additional condition for Euclidean designs or inside inner product sets.  
\end{remark}

 \textbf{Acknowledgements.} The author thanks Professor Eiichi 
Bannai and Professor Akihiro Munemasa for providing useful
information and comments. 
The author is grateful to the referees for a lot of insightful suggestions.

{\it Hiroshi Nozaki}\\
	Graduate School of Information Sciences, \\
	Tohoku University \\
	Aramaki-Aza-Aoba 6-3-09, \\
	Aoba-ku, \\
	Sendai 980-8579, \\
	Japan\\ 
	nozaki@ims.is.tohoku.ac.jp\\
\end{document}